
\documentclass[letterpaper, 10 pt, conference]{ieeeconf}  

\IEEEoverridecommandlockouts                              

\overrideIEEEmargins                                      



\newtheorem{theorem}{Theorem}
\newtheorem{lemma}{Lemma}
\newtheorem{remark}{Remark}
\newtheorem{definition}{Definition}

\newtheorem{problem}{Problem}

\newtheorem{example}{Example}

\usepackage{comment}


\usepackage{comment}
\usepackage{subfigure}

\newcommand{\abs}[1]{\ensuremath{\left\vert#1\right\vert}}

\newcommand{\cfunof}[1]{\ensuremath{\left\{#1\right\}}}

\newcommand{\C}{\ensuremath{\mathbb{C}}}

\newcommand{\funof}[1]{\ensuremath{\left(#1\right)}}

\newcommand{\Hfty}{\ensuremath{\mathscr{H}_{\infty}}}

\newcommand{\jw}{\ensuremath{\left(j\omega\right)}}

\newcommand{\Ltwo}[1][]{\ensuremath{\mathscr{L}_{2}^{#1}}}

\newcommand{\norm}[1]{\ensuremath{\left\Vert #1 \right\Vert}}

\newcommand{\Rat}{\ensuremath{\mathscr{R}}}

\newcommand{\R}{\ensuremath{\mathbb{R}}}

\newcommand{\s}{\ensuremath{\left(s\right)}}
\newcommand{\sqfunof}[1]{\ensuremath{\left[#1\right]}}

\newcommand{\tm}{\ensuremath{\left(t\right)}}

\usepackage[cmex10]{amsmath}
\usepackage{amsfonts}
\usepackage{amssymb}
\usepackage{mathtools}
\usepackage{nicefrac} 
\usepackage{mathrsfs}
\usepackage{cleveref,graphicx}
\usepackage[toc,acronym,nonumberlist]{glossaries}
\usepackage{enumerate,color}

\usepackage{tikz}

\usepackage{pgfplots} 

\pgfplotsset{compat=newest,width=\columnwidth} 
\pgfplotsset{plot coordinates/math parser=false} 
\newlength\fheight 
\newlength\fwidth 

\usepackage{tikz}
\usetikzlibrary{arrows,shapes,positioning}

\usepackage{cleveref}
\usepackage{autonum}
\usepackage{glossaries}
\crefname{problem}{problem}{problems}
\crefname{proposition}{proposition}{propositions}
\crefname{procedure}{procedure}{procedures}
\crefname{assumption}{assumption}{assumptions}
\newacronym{lti}{LTI}{linear time invariant}
\tikzset{every picture/.style=thick}

\title{\LARGE \bf
On the Optimal Control of Relaxation Systems
}


\author{Richard Pates, Carolina Bergeling and Anders Rantzer
\thanks{The authors are with the Department of Automatic Control, Lund University, Box 118, {SE-221 00 Lund}, Sweden. They are members of the LCCC Linnaeus Center and the ELLIIT Excellence Center at Lund University.} 
\thanks{This work was supported by the Swedish Foundation for
Strategic Research, and the Swedish Research Council through the LCCC Linnaeus Center.}
}



\setlength{\abovecaptionskip}{0pt plus 1pt minus 2pt} 
\setlength{\belowcaptionskip}{2pt plus 2pt minus 8pt} 
\setlength{\textfloatsep}{2pt plus 1pt minus 2pt} 
\setlength{\dbltextfloatsep}{2pt plus 2pt minus 2pt} 

\makeatletter
\g@addto@macro\normalsize{%
 \setlength\abovedisplayskip{5pt plus 5pt minus 0pt} 
 \setlength\belowdisplayskip{5pt plus 5pt minus 0pt} 
 \setlength\abovedisplayshortskip{5pt plus 5pt minus 0pt} 
 \setlength\belowdisplayshortskip{5pt plus 5pt minus 0pt} 
}
\makeatother



%


\begin{document}

\maketitle
\thispagestyle{empty}
\pagestyle{empty}

\begin{abstract}
 The relaxation systems are an important subclass of the passive systems that arise naturally in applications. We exploit the fact that they have highly structured state-space realisations to derive analytical solutions to some simple H-infinity type optimal control problems. The resulting controllers are also relaxation systems, and often sparse. This makes them ideal candidates for applications in large-scale problems, which we demonstrate by designing simple, sparse, electrical circuits to optimally control large inductive networks and to solve linear regression problems.
\end{abstract}

\section{Introduction}

In this paper we consider the problem of designing optimal controllers for relaxation systems. Such systems play an important role in applications, and correspond to \cite{Wil72}:
\begin{enumerate}
    \item Reciprocal electrical networks with only one type of energy storage element (i.e. only inductors or only capacitors).
    \item Mechanical systems in which inertial effects may be neglected.
    \item Viscoelastic systems.
    \item Thermal systems.
\end{enumerate}
This makes them ideal candidates for modelling a range of simple networks  and optimisation algorithms, including the single commodity flow problem, symmetric consensus algorithms and heating networks \cite{SMR67,RBA05,BK06,BXM07}.

In the 1970s Jan Willems made several fundamental contributions on the realisability and synthesis of relaxation systems \cite{Wil72,Wil76,MW75}. In particular he demonstrated that they have highly structured state-space realisations. He used this property to connect several important reciprocity theorems from physics to the theory of dissipative systems, as well as to solve some problems in electrical network synthesis.

Our main contribution is to show that the same inherent structure in the realisations of relaxations systems can be exploited to solve two optimal control problems analytically. In particular we build on the techniques in \cite{BPR19} to show that if the system with dynamics
\[
\hat{y}\s=G\s\hat{u}\s
\] 
is of the relaxation type, then the control law
\begin{equation}\label{eq:c1}
\hat{u}\s=-\alpha^{-1}G\funof{0}\hat{y}\s
\end{equation}
minimises
\[
\int_{0}^\infty{}y\tm{}^Ty\tm+\alpha^2{}u\tm^Tu\tm{}\,dt
\]
over a set of bounded \Ltwo{}-norm disturbances. We also show that a similar energy-based performance measure is minimised by
\begin{equation}\label{eq:c2}
\hat{u}\s=-\alpha^{-1}\hat{y}\s.
\end{equation}
These results are presented in \Cref{sec:res}.

The simple analytical nature of these controllers makes them ideal candidates for applications to large-scale problems. This is because the control laws \cref{eq:c1,eq:c2} are simple to update if the network changes, and globally optimal. Furthermore they are at least as sparse as $G\funof{0}$ and can be synthesised with resistive circuits that inherit the underlying structure of the original system. This will be illustrated in \Cref{sec:ex}, where we will show how to design simple electrical circuits to optimally control large inductive networks and to solve least squares problems.

\section*{Notation}

$\Rat^{n\times{}m}$ denotes an $n\times{}m$ matrix of proper real rational transfer functions, and $\hat{y}\s$ the one-sided Laplace transform of a signal $y\tm:[0,\infty)\rightarrow{}\R^n$. A transfer function $G\in\Rat^{n\times{}m}$ has a realisation
\[
\Sigma_G=\sqfunof{\begin{array}{c|c}
        A & B \\\hline
        C & D
    \end{array}}
\]
if
\[
G\s=C\funof{sI-A}^{-1}B+D.
\]
A realisation is said to be minimal if $\funof{A,B}$ is controllable and \funof{A,C} is observable. Finally, $M^\dagger$ denotes the Moore-Penrose pseudo-inverse of a matrix with complex entries, and $M\succeq{}0$ and $M\succ{}0$ denote that such a matrix is both Hermitian, and positive semi-definite or positive definite, respectively.

\section{Preliminaries}

The relaxation systems, so called because of their close connections with the relaxation function from physics, are the input-output LTI systems with completely monotone impulse responses. Dynamcially they correponsd to the systems that exhibit no oscillatory behaviour. Jan Willems made several fundamental contributions on their realisability in the 70s, see \cite{Wil72,Wil76,MW75}. We will summarise and illustrate the properties of relaxation systems that we require in this section.

A matrix valued function
\[
W\funof{\cdot}:[0,\infty)\rightarrow{}\R^{m\times{}m}
\]
is said to be completely monotone \cite{Wid46} if for all $t>0$ and $n=0,1,2,\ldots{}$,
\[
\funof{-1}^n\frac{d^nW\tm}{dt^n}\succeq{}0.
\]
Basic examples include
\[
e^{-t}\,\text{and}\,\ln\funof{1+1/t},
\]
and if $A\preceq{}0$, $e^{At}$. We now formally define the relaxation systems.

\begin{definition}
Let $G\in\Rat^{m\times{}m}$ be the transfer function of a continuous time system with impulse response $D\delta\tm+W\tm$, where $\delta\tm$ is the Dirac delta function. $G$ is said to be a relaxation system if $D\succeq{}0$ and $W\tm$ is a completely monotone function.
\end{definition}

One of Willems' central contributions was to demonstrate that such systems have highly structured state-space realisations, and special storage functions that can be physically motivated. The relevant result for our purposes is the following, which is essentially just a restatement of \cite[Theorem 9]{Wil72}.

\begin{theorem}\label{thm:1}
Let $G\in\Rat^{m\times{}m}$ be the transfer function of a continuous time system. The following are equivalent:
\begin{enumerate}[(i)]
    \item $G$ is a relaxation system.
    \item There exist matrices $A,B$ and $D$ such that $\funof{A,B}$ is controllable, $A\preceq{}0$, $D\succeq{}0$ and
    \[
    \Sigma_G=\sqfunof{\begin{array}{c|c}
        A & B \\\hline
        B^T & D
    \end{array}}.
    \]
    \item Given any minimal realisation
    \[
    \Sigma_G=\sqfunof{\begin{array}{c|c}
        A & B \\\hline
        C & D
    \end{array}},
    \]
    the matrix $D$ is positive semi-definite, and there exists a $Q\succ{}0$ such that $QA=A^TQ\preceq{}0$ and $QB=C^T.$
\end{enumerate}
\end{theorem}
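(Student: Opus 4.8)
The plan is to prove the cyclic chain of implications $\mathrm{(i)}\Rightarrow\mathrm{(ii)}\Rightarrow\mathrm{(iii)}\Rightarrow\mathrm{(i)}$.

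For $\mathrm{(i)}\Rightarrow\mathrm{(ii)}$, the key tool is the integral (Bernstein) representation of completely monotone functions \cite{Wid46}: a matrix-valued completely monotone $W$ satisfies $W\tm=\int_0^\infty e^{-\lambda t}\,d\mu\funof{\lambda}$ for some positive-semidefinite-matrix-valued measure $\mu$, and since $G$ is rational, $\mu$ is supported on finitely many points, so
\[
G\s=D+\sum_{i=1}^k\funof{s+\lambda_i}^{-1}R_i,\qquad\lambda_i\geq0,\ R_i\succeq0,\ D\succeq0,
\]
equivalently $W\tm=\sum_{i=1}^kR_ie^{-\lambda_it}$. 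Factoring each $R_i=B_iB_i^T$ and assembling the block-diagonal $A=\operatorname{diag}\funof{-\lambda_1I,\dots,-\lambda_kI}\preceq0$, the matrix $B$ formed by stacking $B_1^T,\dots,B_k^T$, and $C=\begin{bmatrix}B_1&\cdots&B_k\end{bmatrix}=B^T$, gives a (generally non-controllable) realisation of $G$. Restricting to the reachable subspace $\mathcal R=\operatorname{Im}\begin{bmatrix}B&AB&\cdots&A^{n-1}B\end{bmatrix}$---which is $A$-invariant---using an orthonormal basis arranged as the columns of $V$ produces $A_c=V^TAV\preceq0$, $B_c=V^TB$ and $C_c=CV=B^TV=B_c^T$, with $\funof{A_c,B_c}$ controllable and the same transfer function. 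This is exactly the structure in $\mathrm{(ii)}$.

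For $\mathrm{(ii)}\Rightarrow\mathrm{(iii)}$, first note that a controllable realisation with $A=A^T$ and $C=B^T$ is automatically observable, because $\begin{bmatrix}C^T&A^TC^T&\cdots&\funof{A^T}^{n-1}C^T\end{bmatrix}$ equals $\begin{bmatrix}B&AB&\cdots&A^{n-1}B\end{bmatrix}$ and hence has full rank; so the realisation in $\mathrm{(ii)}$ is minimal. A given minimal realisation of $G$ is related to it by a state-space similarity, i.e. $\tilde A=T^{-1}AT$, $\tilde B=T^{-1}B$, $\tilde C=CT$ for some invertible $T$; then $Q:=T^TT\succ0$ satisfies $Q\tilde A=T^TAT=\tilde A^TQ\preceq0$ (using $A=A^T\preceq0$) and $Q\tilde B=T^TB=\tilde C^T$. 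Finally $D=\lim_{s\to\infty}G\s$ does not depend on the realisation and coincides with the positive semi-definite $D$ of $\mathrm{(ii)}$.

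For $\mathrm{(iii)}\Rightarrow\mathrm{(i)}$, use $Q$ to symmetrise: $\bar A:=Q^{1/2}AQ^{-1/2}$ is symmetric with $\bar A\preceq0$ (both follow from $QA=A^TQ\preceq0$), and with $\bar B:=Q^{1/2}B$ and $C=B^TQ$ one has
\[
W\tm=Ce^{At}B=B^TQe^{At}B=\bar B^Te^{\bar At}\bar B.
\]
Taking the spectral decomposition $\bar A=-\sum_i\lambda_iP_i$ with $\lambda_i\geq0$ and $P_i$ orthogonal projectors gives $W\tm=\sum_ie^{-\lambda_it}\funof{P_i\bar B}^T\funof{P_i\bar B}$, hence
\[
\funof{-1}^n\frac{d^nW\tm}{dt^n}=\sum_i\lambda_i^ne^{-\lambda_it}\funof{P_i\bar B}^T\funof{P_i\bar B}\succeq0
\]
for all $n$. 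Together with $D\succeq0$, this is precisely the statement that $G$ is a relaxation system. The main obstacle is the implication $\mathrm{(i)}\Rightarrow\mathrm{(ii)}$: it hinges on the matrix-valued Bernstein representation together with the rationality argument that collapses $\mu$ to finitely many atoms, and then on checking that the reduction to a reachable sub-realisation preserves the structural constraints $C=B^T$ and $A\preceq0$ (the orthonormality of $V$ is what makes the latter go through); the remaining partial-fraction and spectral bookkeeping is routine. Alternatively, this implication can be quoted directly from \cite[Theorem 9]{Wil72}, which the statement essentially restates.
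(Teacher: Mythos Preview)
The paper does not supply its own proof of this theorem; it simply introduces it as ``essentially just a restatement of \cite[Theorem 9]{Wil72}'' and moves on. Your cyclic argument $\mathrm{(i)}\Rightarrow\mathrm{(ii)}\Rightarrow\mathrm{(iii)}\Rightarrow\mathrm{(i)}$ is correct and is the standard route (Bernstein representation plus rationality to get the partial-fraction form with positive semi-definite residues at nonpositive real poles, orthogonal restriction to the reachable subspace to enforce controllability while preserving $A\preceq 0$ and $C=B^T$, similarity to transport the structure to an arbitrary minimal realisation, and finally the spectral decomposition of $\bar A=Q^{1/2}AQ^{-1/2}$ to verify complete monotonicity). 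You already note that $\mathrm{(i)}\Rightarrow\mathrm{(ii)}$ can alternatively be quoted from Willems, which is exactly what the paper does for the whole statement; so there is nothing to reconcile.
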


For the special symmetric realisation in (ii), the $Q$ from part (iii) equals $I$. However even in the general case it is always unique and can be calculated \cite[Lemma 3]{Wil72} according to
\[
Q=\begin{bmatrix}
    C^T&A^TC^T&\ldots{}\funof{A^T}^{n-1}
    \end{bmatrix}\begin{bmatrix}
    B&AB&\ldots{}A^{n-1}B
    \end{bmatrix}^\dagger{}.
\]
The matrix $Q$ has many appealing interpretations in the context of dissipativity theory. A detailed discussion of this would take us too far, however for the purposes of this paper it suffices to say that the quantity 
\[
V\funof{x}=\frac{1}{2}x^TQx
\]
corresponds directly to the energy stored internally in the system, and although $Q$ depends on the particular realisation of $G$, $V\funof{x}$ is specified entirely by the input-output behaviour of $G$. By factoring $Q=S^TS$ it can also be used to map an arbitrary minimal realisation into the symmetric form via
\begin{equation}\label{eq:simtr}
    \sqfunof{\begin{array}{c|c}
        A & B \\\hline
        C & D
    \end{array}}\mapsto{}\sqfunof{\begin{array}{c|c}
        SAS^{-1} & SB \\\hline
        CS^{-1} & D
    \end{array}}.
\end{equation}
We will highlight these features in the simple example below, and encourage the interested reader to consult \cite[\S{}10-12]{Wil72}.

\begin{figure}
    \centering
    \begin{tikzpicture}[>=stealth]
    \draw[-, thick] (1,0) -- (3,0);
    \draw[-, thick] (1,2) -- (3,2);
    
    \draw[-, thick] (2,2) -- (2,1.5);
    \draw[-, thick] (2,0) -- (2,0.5);
    \draw[->, thick] (2,0.5) -- node[right, yshift=0mm] {$i_R$} (2,0.15);
    \draw[decoration=zigzag,decorate, thick] (2,.5) -- (2,1.5);
    \node (R) at (1.5,1) {$R$};
    
    \draw[-, thick] (3,0) -- (3,2);
    \draw[white,fill=white] (2.8,0.9) rectangle (3.2,1.1);
    \draw[-, thick] (2.7,0.9) -- (3.3,0.9);
    \draw[-, thick] (2.7,1.1) -- (3.3,1.1);
    \node (C) at (3.75,1) {$C$};
    
    \draw[->, thick] (1,0.15) -- node[left] {$v$} (1,1.85);
    \draw[<-, thick] (0.25,0) -- node[above] {$i$} (0.85,0);
    \draw[->, thick] (0.25,2) -- node[above] {$i$} (0.85,2);
    
    \draw[black, fill=white] (1,0) circle (0.05);
    \draw[black, fill=white] (1,2) circle (0.05);
    
    \end{tikzpicture}
    \caption{RC-circuit studied in \Cref{ex:1}}
    \label{fig:1}
\end{figure}
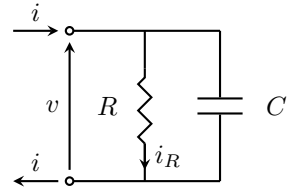

\begin{example}\label{ex:1}
Consider the simple RC-circuit shown in \Cref{fig:1}. This system is governed by the equations
\begin{equation}\label{eq:capeq}
\begin{aligned}
    \dot{q}&=i-i_R,\\
    v&=\frac{1}{C}q=Ri_R,
    \end{aligned}
\end{equation}
where $i$ and $v$ are the current through and voltage across the terminals, $q$ the charge on the capacitor with capacitance $C$, and $i_R$ the current through the resistor with resistance $R$. The transfer function $G:i\rightarrow{}v$ for this system equals
\begin{equation}\label{eq:tfcaprel}
G\s=\frac{1}{Cs+1/R},
\end{equation}
and has realisation
\begin{equation}\label{eq:caprel}
\Sigma_G=\sqfunof{\begin{array}{c|c}
        -1/{RC} & 1 \\\hline
        1/C & 0
    \end{array}}.
\end{equation}
Setting $Q=1/C$ and applying \Cref{thm:1} shows that $G$ is a relaxation system, which is to be expected since it contains only one type of storage element. Furthermore
\[
V\funof{x}\equiv{}\frac{1}{2}\frac{q^2}{C},
\]
which is the familiar equation for the energy stored in a capacitor, and the similarity transform \cref{eq:simtr} given by $S=1/\sqrt{C}$ clearly takes \cref{eq:caprel} into its symmetric form.
\end{example}

\section{Results}\label{sec:res}

In this section we solve two simple optimal control problems for the following LTI system
\begin{equation}\label{eq:p1}
\begin{aligned}
\dot{x}&=Ax+Bu+w,\;x\funof{0}=0,\\
y&=Cx+Du,
\end{aligned}
\end{equation}
that apply when it realises a relaxation system. In the above $x\in\R^n,y\in\R^p,u\in\R^m$ and $w\in\R^n$ denote the state, output, input and disturbance respectively, and $A,B,C,D$ are matrices of compatible dimension with real entries. We consider the problem of designing an internally stabilising control law
\begin{equation}\label{eq:p2}
\hat{u}\s=-K\s{}\hat{y}\s
\end{equation}
to minimise two different performance measures in the face of the disturbance $w$. We assume that the disturbance is from the following class
\[
\mathcal{W}_Q\coloneqq{}\cfunof{w\tm:\int_{0}^\infty{}w\tm^TQw\tm{}\,dt\leq{}1},
\]
where $Q$ is a positive definite matrix.

First we consider the following optimal control problem: 
\begin{problem}\label{prob:1}
Let $\alpha>0$. Minimise
\begin{equation}\label{eq:op1}
\sup_{w\in\mathcal{W}_Q}\int_{0}^\infty{}y\tm{}^Ty\tm+\alpha^2{}u\tm^Tu\tm{}\,dt
\end{equation}
subject to \cref{eq:p1,eq:p2} over stabilising $K\in\Rat^{m\times{}m}$.
\end{problem}
In words, the objective is to design the controller to regulate the output $y$ in the presence of disturbances $w\in\mathcal{W}_Q$. The second term in \cref{eq:op1} penalises the amount of control effort required to achieve this, and the size of $\alpha$ can be chosen to balance these competing objectives.

The second problem we consider is the following:
\begin{problem}\label{prob:2}
Let $\alpha>0$. Minimise
\begin{equation}\label{eq:op2}
\sup_{w\in\mathcal{W}_Q}\int_{0}^\infty{}y\tm{}^Ty\tm+\alpha^2{}u\tm^T\bar{y}\tm{}\,dt,
\end{equation}
where
\[
\bar{y}\tm=y\tm-\int_0^tCe^{A\funof{t-\tau}}w\funof{\tau{}}\,dt,
\]
subject to \cref{eq:p1,eq:p2} over stabilising $K\in\Rat^{m\times{}m}$.
\end{problem}
The objective is very similar to \Cref{prob:1}. The only difference is that the term penalising the control effort has been replaced with a penalty on $u\tm^T{}\bar{y}\tm$. Note that $\bar{y}$ is nothing but the part of the output that is caused by the input $u$. The motivation for this is that if \cref{eq:p1} realises a relaxation system, then typically the quantity $u\tm{}^T\bar{y}\tm{}$ is the product of a current and voltage (or their analogues), and has the units of power. Therefore
\[
\int_0^\infty{}u\tm^T\bar{y}\tm{}\,dt
\]
corresponds to the energy supplied to the system by the controller, which is arguably a more natural way to penalise the control effort.

\begin{remark}
\Cref{prob:1} is equivalent to a standard \Hfty{} optimal control problem, but \Cref{prob:2} is not.
\end{remark}

The following theorem is the main result of this paper, and shows that if
\[
\sqfunof{\begin{array}{c|c}
        A & B \\\hline
        C & D
    \end{array}}=\Sigma_G
\]
is the realisation of a relaxation system $G$ with storage
\[
V\funof{x}=\frac{1}{2}x^TQx,
\]
then both \Cref{prob:1,prob:2} can be solved analytically (the constraints on the realisation in the theorem statement exactly match those in \Cref{thm:1}(iii)). Note in particular that the optimal controllers are themselves relaxation systems, are independent of the realisation of $G$, are at least as sparse as $G\funof{0}$ and can be synthesised with networks of resistors (and transformers) that inherit the underlying structure of the original system. These features will be illustrated in \Cref{sec:ex}.

\begin{theorem}\label{thm:main}
If $AQ=QA^T\preceq{}0$, $QB=C^T$ and $D\succeq{}0$, then:
\begin{enumerate}
    \item $K\equiv{}\alpha^{-1}\funof{D-CA^{-1}B}$ solves \Cref{prob:1}.
    \item $K\equiv{}\alpha^{-1}I$ solves \Cref{prob:2}.
\end{enumerate}
\end{theorem}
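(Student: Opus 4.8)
The plan is to treat both items as \Hfty{}-type output-feedback problems, proving optimality of the proposed controllers by pairing a dissipation (storage-function) argument — which upper-bounds the worst-case cost they attain — with a matching lower bound valid for every stabilising $K$. The engine throughout is the supply-rate identity obtained by differentiating the internal energy $V\funof{x}=\tfrac12 x^TQx$ along $\dot x=Ax+Bu+w$: since $QB=C^T$ gives $x^TQBu=(y-Du)^Tu$, and $QA=A^TQ\preceq0$, $D=D^T\succeq0$, one obtains $\dot V = x^TQAx + y^Tu - u^TDu + x^TQw \le y^Tu + x^TQw$. I would also record, as consequences of $QA^{-1}=A^{-T}Q$, that $D-CA^{-1}B=G\funof{0}$ is symmetric and positive semidefinite, and note that the proposed controllers are static (so in particular, since they are nonnegative-definite gains, they are degenerate relaxation systems).

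First I would reformulate. Substituting $w=Q^{-1/2}v$ turns $\mathcal{W}_Q$ into the unit ball of \Ltwo{}, so for fixed stabilising $K$ the worst case of the objective in \Cref{prob:1} equals $\|T_K\|_{\Hfty{}}^2$, where $T_K$ maps $v$ to the performance output $(y,\alpha u)$; hence \Cref{prob:1} is $\inf_K\|T_K\|_{\Hfty{}}$. \Cref{prob:2} is analogous, except that its supply rate is the indefinite cross term $y^Ty+\alpha^2u^T\bar y$ and $\bar y$ is the input-driven part of the output. I would handle this by splitting the state as $x=x_u+x_w$ into its input- and disturbance-driven components, so that $\bar y=Cx_u+Du$, and by applying the supply-rate identity to $\tfrac12 x_u^TQx_u$ to get $\int_0^\infty u^T\bar y\,dt\ge0$ (exactly the passivity of $G$), which is the right starting point for \Cref{prob:2}.

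Next I would prove the two halves of each claim. For the upper bound I would first check that closing the loop with the proposed static $K$ preserves the relaxation structure, i.e. $\dot x=A_{cl}x+w$ with $QA_{cl}=A_{cl}^TQ\preceq0$ (for \Cref{prob:2} this is immediate, $QA_{cl}=QA-C^T\funof{\alpha I+D}^{-1}C$; for \Cref{prob:1} it uses the symmetry of $G\funof{0}$), so that the closed loop is again of relaxation type. Then I would exhibit a storage function $\tfrac12 x^TPx$ with $P\succeq0$ — a scalar multiple of $Q$, or more likely a Riccati-type object assembled from the closed-loop data — certifying $\tfrac{d}{dt}\funof{\tfrac12 x^TPx}\le\gamma_\star^2\,w^TQw-(\text{integrand})$ for the claimed optimal value $\gamma_\star$; integrating with $x\funof{0}=0$ then yields worst-case cost $\le\gamma_\star^2$. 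For the lower bound, given an arbitrary stabilising $K$ I would produce a disturbance $w\in\mathcal{W}_Q$ forcing the objective to be at least $\gamma_\star^2$ — the natural candidates being slowly-varying signals $w\propto\sqrt{\epsilon}\,e^{-\epsilon t}w_0$ — together with a minimax/duality reduction of the optimisation over $K$ to a finite-dimensional matrix problem in which $G\funof{0}\succeq0$ and the structural identities single out both the minimiser and the value $\gamma_\star$.

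The hardest part will be matching the two bounds at the common value $\gamma_\star$: the optimal controller is not the one that minimises the DC gain of the closed loop, so the lower bound cannot be read off by evaluating $T_K$ at a single frequency, and the storage function certifying tightness is a genuine \Hfty{}-Riccati solution whose closed form must be guessed from the relaxation structure and then verified by a lengthy Schur-complement calculation leaning on $QB=C^T$ and $D\succeq0$. For \Cref{prob:2} there is the further difficulty that the supply rate is indefinite and couples $u$ to $\bar y$, so even writing down the correct bounded-real-type inequality, and checking positive semidefiniteness of the resulting $P$, requires the state decomposition and passivity identity above.
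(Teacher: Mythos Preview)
Your lower-bound half is essentially the paper's Stage~1: probing with near-constant disturbances (the paper uses scaled steps $H_T\tm v$ with $T\to\infty$, equivalent to your $\sqrt{\epsilon}\,e^{-\epsilon t}w_0$) reduces the optimisation over all stabilising $K$ to a finite-dimensional least-squares problem at $s=0$, which the paper solves via a short pseudo-inverse lemma to extract both $\gamma_\star$ and the minimising $K$. So that part is on track and matches the paper.

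The upper-bound half diverges, and the divergence is driven by a misconception. You assert that ``the optimal controller is not the one that minimises the DC gain of the closed loop, so the lower bound cannot be read off by evaluating $T_K$ at a single frequency'', and therefore reach for an unknown Riccati storage function. For relaxation systems this is precisely backwards. Passing to the symmetric realisation $(\bar A,\bar B,\bar B^T,D)$ with $\bar A\preceq 0$, the paper observes that closing the loop with \emph{any} constant $\bar K\succeq 0$ yields a closed-loop generator $-X$ with $X=X^T\succ 0$, so the map $v\mapsto(y,\alpha u)$ factors as a constant matrix times $\funof{sX^{-1}+I}^{-1}$; since $\norm{\funof{sX^{-1}+I}^{-1}}_\infty=1$ is attained at $s=0$, the \Hfty{} norm \emph{equals} the DC gain for every such controller. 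Hence the Stage~1 lower bound is already tight and no Riccati guess is needed. Your dissipation-inequality programme is not wrong in principle, but you have not identified $P$, and the ``lengthy Schur-complement calculation'' you anticipate is bypassed entirely by this symmetry observation. For \Cref{prob:2} the paper adds one further device you do not mention: the pointwise bound $\abs{\hat u\jw^*G\jw\hat u\jw}\le \hat u\jw^*G\funof{0}\hat u\jw$ (immediate from $\bar A\preceq 0$, $D\succeq 0$) together with Plancherel converts the indefinite cross-term $\int u^T\bar y\,dt$ into $\int u^TG\funof{0}u\,dt$, after which the same DC argument applies.
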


Before proving the result we will illustrate its meaning on the system from \Cref{ex:1}.

\begin{example}
Applying \Cref{thm:main} to the system in \Cref{ex:1} with realisation \cref{eq:caprel} and $Q\equiv{}1/C$ shows that the controller
\[
K\equiv{}R/\alpha
\]
solves \Cref{prob:1}, and the controller
\[
K\equiv{}1/\alpha
\]
solves \Cref{prob:2}. Both controllers are independent of the realisation, provided the matrix $Q$ is updated accordingly. Observe that choosing $Q$ to satisfy the conditions of the theorem normalises the size of the disturbance to match the physical properties of the energy storage elements.
\end{example}

\begin{proof}
The proof will be in two stages. We will first show that the given controllers are optimal with respect to a restricted class of disturbances. We will then exploit the properties of relaxation systems to show that the same level of performance is achieved even when disturbances in the full class are allowed (that is, the worst case disturbances in $\mathcal{W}_Q$ are contained in the restricted class). Stability will be tacitly assumed throughout, and is guaranteed by the passivity theorem (both the plant and controller are relaxation systems, which are automatically passive).

\textit{Stage 1:}
Let 
\[
H_T\tm=\begin{cases}
1/T&\text{if $0\leq{}t\leq{}T$,}\\
0&\text{otherwise},
\end{cases}
\]
and define the class of disturbances
\begin{equation}\label{eq:disres}
\mathcal{W}_Q^T=\cfunof{w\tm:w\tm=H_T\tm{}v,v^TQv=1}.
\end{equation}
We will now show that the controllers in 1) and 2) minimise the performance criteria in \Cref{prob:1,prob:2} over all disturbances $w\in\mathcal{W}_Q^T$ in the limit $T\rightarrow{}\infty$. The key feature in this argument is that this restriction reduces the synthesis problem into a matrix minimisation problem that can be solved using least squares techniques. Since for any $T>0$, $\mathcal{W}_Q^T\subset\mathcal{W}_Q$, the minimum value of the cost over this class of disturbances can be no larger than the cost in \Cref{prob:1,prob:2}.

We now proceed to solve \Cref{prob:1,prob:2} under the restriction that $w\in\mathcal{W}_Q^T$. The system dynamics impose the following constraint between $\hat{u},\hat{y}$ and $\hat{w}$:
\begin{equation}\label{eq:r4}
\begin{bmatrix}
\hat{y}\s\\\hat{u}\s
\end{bmatrix}\!\!=\!\!\begin{bmatrix}
I\\-K\s
\end{bmatrix}\!\funof{I+G\s{}K\s}^{-1}\!C\funof{sI-A}^{-1}\!\!\hat{w}\s,
\end{equation}
where
\[
G\s=C\funof{sI-A}^{-1}B+D.
\]
A standard argument (e.g. \cite[Chapter 1]{Vin00}) shows that if
\begin{equation}\label{eq:basedis}
w\tm=vH_T\tm{},
\end{equation}
where $v\in\R^m$, then
\begin{equation}\label{eq:cri1}
\lim_{T\rightarrow{}\infty}\int_{0}^\infty{}y\tm{}^Ty\tm+\alpha^2{}u\tm^Tu\tm{}\,dt=z^T\begin{bmatrix}
I&0\\0&\alpha^2
\end{bmatrix}z,
\end{equation}
where
\begin{equation}
z=\begin{bmatrix}
I\\-K\s
\end{bmatrix}\funof{I+G\s{}K\s}C\funof{sI-A}^{-1}\Bigg\vert_{s=0} v.
\end{equation}
Note that this is saying nothing more than the size of the response of a stable system to a step input is given by the DC gain of the system. It then follows from \Cref{lem:lb}, which is stated and proved in the Appendix, that $K\equiv{}\alpha^{-1}G\funof{0}^T$ minimises
\[
\lim_{T\rightarrow{}\infty}\sup_{w\in\mathcal{W}_Q^T}\int_{0}^\infty{}y\tm{}^Ty\tm+\alpha^2{}u\tm^Tu\tm{}\,dt
\]
subject to \cref{eq:p1,eq:p2}. By \Cref{thm:1}(ii), $G\funof{0}=G\funof{0}^T$, which proves that the controller in 1) is optimal for \Cref{prob:1} when the disturbances are restricted to lie in $\mathcal{W}_Q^\infty{}$. 

A similar argument can be used on \Cref{prob:2}. To see this observe that $\hat{\bar{y}}\s=G\s\hat{u}\s$. Therefore just as before, if the disturbance is given by \cref{eq:basedis}, in the limit $T\rightarrow{}\infty$
\begin{equation}\label{eq:cri2}
\int_{0}^\infty{}y\tm{}^Ty\tm+\alpha^2{}u\tm^T\bar{y}\tm{}\,dt=z^T\begin{bmatrix}
I&0\\0&\alpha^2G\funof{0}
\end{bmatrix}z.
\end{equation}
Since by \Cref{thm:1}(ii), $G\funof{0}\succeq{}0$ it also follows from \Cref{lem:lb} that $K\equiv{}\alpha^{-1}I$ solves
\Cref{prob:2} over disturbances in $\mathcal{W}_Q^\infty{}$ (technically this requires that $G\funof{0}\succ{}0$, but a simple limit argument can be used to cover the semi-definite case).

\textit{Stage 2:} We will now show that whenever $K\s=\bar{K}\succeq{}0$ is stabilising, the disturbances of the form in \cref{eq:disres} are the worst-case. This will prove that the controllers in 1) and 2) are optimal since they are both stabilising, positive semi-definite, and optimal over disturbances in \cref{eq:disres}.

We will first consider \Cref{prob:1}. Note that given any controller the performance criterion in this problem equals
\[
\norm{\begin{bmatrix}
I\\-\alpha{}^{-1}K\s
\end{bmatrix}\funof{I+G\s{}K\s}^{-1}C\funof{sI-A}^{-1}\sqrt{Q}^{-1}}_\infty,
\]
where $\sqrt{\cdot{}}$ denotes the positive definite matrix square root. Standard algebraic manipulations show that the transfer function in the above equals
\begin{equation}\label{eq:11}
\begin{bmatrix}
I\\-\alpha{}^{-1}K\s
\end{bmatrix}\funof{I+DK\s}^{-1}C\sqrt{Q}^{-1}M\s,
\end{equation}
where
\[
M\s\!=\!\!\sqrt{Q}\funof{sI-A+BK\s\funof{I+DK\s}^{-1}\!\!C}^{-1}\!\!\!\!\!\!\sqrt{Q}^{-1}\!.
\]
We will now make use of the symmetric realisations for relaxation systems from \Cref{thm:1}. In particular this guarantees that
\[
\sqfunof{\begin{array}{c|c}
        \sqrt{Q}A\sqrt{Q}^{-1} & \sqrt{Q}B \\\hline
        C\sqrt{Q}^{-1} & D
    \end{array}}=\sqfunof{\begin{array}{c|c}
        \bar{A} & \bar{B} \\\hline
        \bar{B}^T & D
    \end{array}},
\]
where $\bar{A}\preceq{}0$. Substituting in this similarity transform shows that
\[
M\s=\funof{sI-\bar{A}+\bar{B}K\s\funof{I+DK\s}^{-1}\bar{B}^T}^{-1}.
\]
Next note that if $K\s\equiv{}\bar{K}\succeq{}0$ is stabilising, then
\[
\begin{aligned}
X&=-\bar{A}+\bar{B}\bar{K}\funof{I+D\bar{K}}^{-1}\bar{B}^T\\
&=-\bar{A}+\bar{B}\sqrt{\bar{K}}\funof{I+\sqrt{\bar{K}}D\sqrt{\bar{K}}}^{-1}\sqrt{\bar{K}}\bar{B}^T\succ{}0.
\end{aligned}
\]
Therefore for any such $K\s$, \cref{eq:11} can be rewritten as
\[
V\s=\begin{bmatrix}
I\\\alpha{}^{-1}\bar{K}
\end{bmatrix}\funof{I+D\bar{K}}^{-1}\bar{B}^TX^{-1}\funof{sX^{-1}+I}^{-1}.
\]
Since 
\[
\norm{\funof{sX^{-1}+I}^{-1}}_\infty{}=1,\,\text{and}\,\funof{sX^{-1}+I}^{-1}\Bigg|_{s=0}=I,
\]
we see that that
\[
\norm{V}_\infty=\norm{V\funof{0}}_2.
\]
Therefore given any stabilising $\bar{K}\succeq{}0$, the worst case disturbance is of the form in \cref{eq:disres}. Therefore the controller in 1) is not only optimal over all disturbances in $\mathcal{W}_Q^\infty{}$, but also over all in $\mathcal{W}_Q$, and therefore solves \Cref{prob:1}. 

We now consider \Cref{prob:2}. This is not an \Hfty{} control problem, so a little more work is required. First observe that
\[
G\s=\begin{bmatrix}
\bar{B}\sqrt{\bar{A}^{-1}}\\\sqrt{D}
\end{bmatrix}^T\begin{bmatrix}
\funof{s\bar{A}^{-1}-I}^{-1}&0\\0&I
\end{bmatrix}
\begin{bmatrix}
\bar{B}\sqrt{\bar{A}^{-1}}\\\sqrt{D}
\end{bmatrix}.
\]
This implies that for any $s$ in the closed right half plane
\[
\begin{aligned}
\abs{\hat{u}\s^*G\s{}\hat{u}\s}\leq{}\hat{u}\s^*G\funof{0}\hat{u}\s.
\end{aligned}
\]
The Plancharel theorem then implies that
\[
\begin{aligned}
\int_{0}^\infty{}u\tm^T\bar{y}\tm{}\,dt&=\int_{-\infty{}}^\infty{}\hat{u}\jw^*G\jw{}\hat{u}\jw{}\,d\omega,\\
&\leq{}\int_{-\infty{}}^\infty{}\abs{\hat{u}\jw^*G\jw{}\hat{u}\jw{}}\,d\omega,\\
&\leq{}\int_{-\infty{}}^\infty{}\hat{u}\jw^*G\funof{0}{}\hat{u}\jw{}\,d\omega,\\
&=
\int_{0}^\infty{}u\tm^TG\funof{0}u\tm{}\,dt.
\end{aligned}
\]
Therefore for any stabilising $K\s\equiv{}\bar{K}\succeq{}0$, the performance criterion in \Cref{prob:2} is always upper bounded by
\[
\norm{\begin{bmatrix}
I&0\\0&\sqrt{G\funof{0}}^{-1}
\end{bmatrix}V\s}_\infty{}=\norm{\begin{bmatrix}
I&0\\0&\sqrt{G\funof{0}}^{-1}
\end{bmatrix}V\funof{0}}_2.
\]
This means that the controller in 2) is not only optimal over all disturbances in $\mathcal{W}_Q^\infty{}$, but also over all in $\mathcal{W}_Q$, and therefore solves \Cref{prob:2}.
\end{proof}

\section{Examples}\label{sec:ex}

\subsection{Optimal Control of Inductive Electrical Networks}

In this example we will show how to synthesise an optimal controller for a simple inductive electrical network. In particular we will show how to interpret and synthesise the optimal controller for \Cref{prob:1} using duality theory for electrical networks. The particular topology considered here has been chosen for illustrative purposes, and far more complicated networks could be handled with an identical methodology.

\begin{figure}[th]
    \centering
    
    \begin{tikzpicture}[>=stealth]
    \draw[-, thick] (1.5,0) -- (3,0);
    \draw[-, thick] (1.5,2) -- (3,2);
    
    \draw[->, thick] (1.5,0.15) -- node[left] {$v$} (1.5,1.85);
    \draw[<-, thick] (0.75,0) -- node[above] {$i$} (1.35,0);
    \draw[->, thick] (0.75,2) -- node[above] {$i$} (1.35,2);
    
   \draw[-, thick] (3,1) -- node[left] {$R_3$} (3,2);
    \draw[-, thick] (3,0) -- node[left] {$L_2$} (3,1);
    \draw[-, thick] (3,2) -- node[above] {$R_1$} (4,1.5);
    \draw[-, thick] (3,0) -- node[below, xshift=1mm] {$L_1$} (4,0.5);
    \draw[-, thick] (4,0.5) -- node[right] {$R_2$} (4, 1.5);
    
    \draw[black, fill=white] (1.5,0) circle (0.05);
    \draw[black, fill=white] (1.5,2) circle (0.05);
    
    \draw[black, fill=black] (3,1) circle (0.05);
    \draw[black, fill=black] (3,0) circle (0.05);
    \draw[black, fill=black] (3,2) circle (0.05);
    \draw[black, fill=black] (4,1.5) circle (0.05);
    \draw[black, fill=black] (4,0.5) circle (0.05);

    \end{tikzpicture}
    
    \caption{An electrical network consisting of resistors and inductors and one port.}
    \label{fig:examplenetwork}
    
\end{figure}
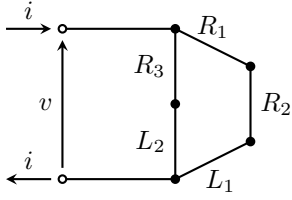

Consider the graph in~\Cref{fig:examplenetwork}. This represents an electrical network in which each edge represents either a resistor or an inductor. In addition a pair of terminals have been specified. It is through this port that currents can be injected into the network, and it is our aim to design a controller to regulate this current flow about an equilibrium. Although very abstract, such physical models are common throughout physics and engineering, and through the use of analogues can be used to represent a wide range of systems, for example commodity flow networks, or heating networks \cite{BXM07,SMR67}. 

Since the network only contains elements of one storage type, the dynamics of the electrical network are of the relaxation type. This can be shown explicitly by finding $G$. In this case it is simple to show by lumping elements that $G:i\rightarrow{}v$ is given by
\[
G\s=\frac{1}{L_1s+R_1+R_2}+\frac{1}{L_2s+R_3}.
\]
One possible realisation of this system is given by
\[
\Sigma_G=\sqfunof{\begin{array}{cc|c}
     -R_1/L_1-R_2/L_1&0&1  \\
     0&-R_3/L_2&1\\\hline
     1/L_1&1/L_2&0
\end{array}},
\]
which clearly satisfies the conditions of \Cref{thm:1} with
\[
Q=\begin{bmatrix}
1/L_1&0\\
0&1/L_2
\end{bmatrix}.
\]
Applying \Cref{thm:main} shows that the controller
\begin{equation}\label{eq:cont}
K\equiv{}\frac{\alpha^{-1}}{R_1+R_2}+\frac{\alpha^{-1}}{R_3}
\end{equation}
is optimal for \Cref{prob:1}. Let us now consider how to synthesise this controller. A simple way to do this is to build a resistor that satisfies
\[
V_c=\underbrace{\funof{\frac{\alpha^{-1}}{R_1+R_2}+\frac{\alpha^{-1}}{R_3}}}_{R_c}I_c.
\]
The control law in \cref{eq:cont} could therefore be implemented by connecting the above resistance to the original system across the terminals, as shown in~\Cref{fig:examplecontrol1}. 

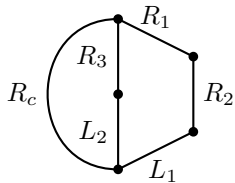
\begin{figure}[ht]
    \centering
    
    \begin{tikzpicture}[>=stealth]
    \draw[-, thick] (3,1) -- node[left] {$R_3$} (3,2);
    \draw[-, thick] (3,0) -- node[left] {$L_2$} (3,1);
    \draw[-, thick] (3,2) -- node[above] {$R_1$} (4,1.5);
    \draw[-, thick] (3,0) -- node[below, xshift=1mm] {$L_1$} (4,0.5);
    \draw[-, thick] (4,0.5) -- node[right] {$R_2$} (4, 1.5);
    
    \draw[thick] (3,2) to[out=180,in=180, distance=1.25cm] node[left] {$R_c$} (3,0);
    
    \draw[black, fill=black] (3,1) circle (0.05);
    \draw[black, fill=black] (3,0) circle (0.05);
    \draw[black, fill=black] (3,2) circle (0.05);
    \draw[black, fill=black] (4,1.5) circle (0.05);
    \draw[black, fill=black] (4,0.5) circle (0.05);
    
    \end{tikzpicture}
    
    \caption{Implemention of the optimal control law.}
    \label{fig:examplecontrol1}
    
\end{figure}

This is because Kirchhoff's laws for this operation are given by
\[
\begin{aligned}
i+I_c&=0\\v&=V_c.
\end{aligned}
\]
These imply that $v=-R_ci$, which is precisely the required control law (c.f. \cref{eq:p2}).

However let us now think further about what the equation for the controller in \Cref{thm:main} means. First note that in this case $G\funof{0}$ is equal to the admittance of the network we wish to control in steady state. That is there will only be a voltage drop across the resistive components. Our task is then to synthesise a resistor with impedance equal to the steady state admittance of the network. Such networks can be found by finding the so called dual network (see e.g. \cite[\S{}10.4.3]{DK69}). This process is illustrated in \Cref{fig:networkconstruction3}. Note that this gives an algorithmic way to synthesise the optimal controller that inherits the sparsity of the electrical network we wish to control. This is not so important for this specific example since the resulting network can always be lumped into a single resistor. The real strength of this approach is that it could be applied to synthesise the optimal controller in a sparse manner even when the graph is large (and planar), and the network has many ports.

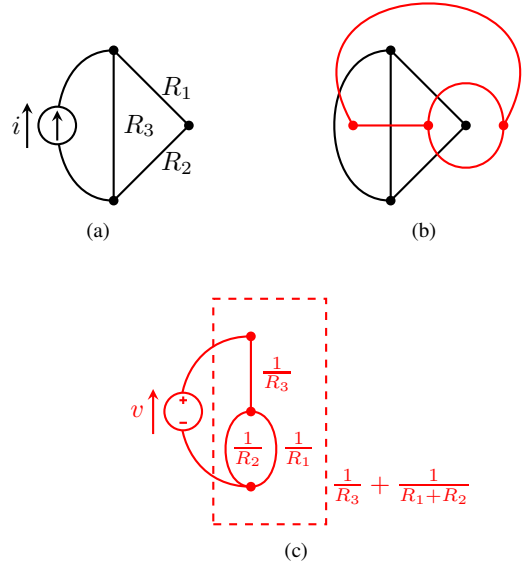
\begin{figure}[t]
    \centering
    
    \subfigure[]{\begin{tikzpicture}[>=stealth]
    \draw[thick] (0,0) -- node[right] {$R_3$} (0,2);
    \draw[thick] (0,0) -- node[right] {$R_2$} (1,1);
    \draw[thick] (0,2) -- node[right] {$R_1$} (1,1);
     \draw[thick] (0,0) to[out=180,in=180, distance=1cm] node[left,xshift=-10pt] {$i$} (0,2);
     \draw[fill=white] (-.75,1) circle (0.25);
    \draw[->] (-1.15,0.7) -- (-1.15, 1.3);
    \draw[->, thick] (-.75,0.8) -- (-.75, 1.2);
     \draw[black, fill=black] (0,0) circle (0.05);
     \draw[black, fill=black] (0,2) circle (0.05);
     \draw[black, fill=black] (1,1) circle (0.05);
    \end{tikzpicture}}
 \hspace{.5cm}   
    \subfigure[]{\begin{tikzpicture}[>=stealth]
    \draw[thick] (0,0) -- (0,2);
    \draw[thick] (0,0) -- (1,1);
    \draw[thick] (0,2) -- (1,1);
     \draw[thick] (0,0) to[out=180,in=180, distance=1cm] (0,2);
     
     \draw[red, thick] (-0.5,1) -- (0.5,1);
     \draw[red, thick] (0.5,1) to[out=90,in=90, distance=0.75cm] (1.5,1);
     \draw[red, thick] (0.5,1) to[out=-90,in=-90, distance=0.75cm] (1.5,1);
     \draw[red, thick] (1.5,1) to[out=60,in=120, distance=2.5cm] (-0.5,1);
     
     \draw[red, fill=red] (-0.5,1) circle (0.05);
     \draw[red, fill=red] (0.5,1) circle (0.05);
     \draw[red, fill=red] (1.5,1) circle (0.05);
     \draw[black, fill=black] (0,0) circle (0.05);
     \draw[black, fill=black] (0,2) circle (0.05);
     \draw[black, fill=black] (1,1) circle (0.05);
    \end{tikzpicture}}
    
    \subfigure[]{\begin{tikzpicture}[>=stealth]
    
    \node (dummy) at (0,2.8) {\textcolor{white}{~}};
    
    \draw[thick, red, -] (0,1) -- node[right] {$\frac{1}{R_3}$} (0,2);
    \draw[thick, red, -] (0,1) to[out=180,in=180, distance=0.45cm] node[right, xshift=-0.5mm] {$\frac{1}{R_2}$} (0,0);
    \draw[thick, red, -] (0,1) to[out=0,in=0, distance=0.45cm] node[right, xshift=-0.5mm] {$\frac{1}{R_1}$} (0,0);
     \draw[thick, red, -] (0,2) to[out=180,in=180, distance=1.25cm] node[left, xshift=-3.5mm] {$v$} (0,0);
     \draw[red,->] (-1.3,.7) -- (-1.3,1.3);
     \draw[red,fill=white] (-.9,1) circle (0.25);
     \draw[red] (-.9,1.1) -- (-.9, 1.2);
     \draw[red] (-.95,1.15) -- (-.85, 1.15);
     \draw[red] (-.95,.85) -- (-.85, .85);
     \draw[red, fill=red] (0,1) circle (0.05);
     \draw[red, fill=red] (0,2) circle (0.05);
     \draw[red, fill=red] (0,0) circle (0.05);
     \draw[dashed, red] (-0.5,-0.5) rectangle (1,2.5);
     \node (shift) at (2,0) {\textcolor{red}{$\frac{1}{R_3}+\frac{1}{{R_1+R_2}}$}};
    
    \end{tikzpicture}}

    \caption{Construction of the dual network used to implement the optimal controller. First, as shown in (a), all the inductive edges are contracted to give a purely resistive network with the same steady state admittance as $G$. Next the dual graph of this network is constructed as shown in (b). Finally, as shown in (c), each edge in the dual graph is assigned a resistance equal to the reciprocal of that from (a). This process produces a circuit with impedance equal to the admittance of the circuit in (a), giving an electrical realisation of the optimal controller.}
    \label{fig:networkconstruction3}

\end{figure}

\subsection{Solving Least Squares Problems Using Circuits}

The use of electrical circuits to solve optimisation problems is classical \cite{BK06}. In this section we will use \Cref{thm:main} to show how to optimise the dynamic performance of a simple circuit that will solve the least squares problem
\begin{equation}\label{eq:lsex}
\min_{x\in\R^m}\norm{Ax+b}_2.
\end{equation}
Recall that this problem has (minimum norm) solution $x\equiv{}-A^{\dagger}b$.

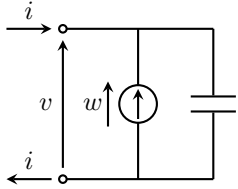
\begin{figure}
    \centering
    \begin{tikzpicture}[>=stealth]
    \draw[-, thick] (1,0) -- (3,0);
    \draw[-, thick] (1,2) -- (3,2);
    
    \draw[-, thick] (2,0) -- (2,2);
    \draw[fill=white] (2,1) circle (0.25);
    \node (R) at (1.4,1) {$w$};
    \draw[->] (1.6,0.7) -- (1.6, 1.3);
    \draw[->, thick] (2,0.8) -- (2, 1.2);
    
    \draw[-, thick] (3,0) -- (3,2);
    \draw[white,fill=white] (2.8,0.9) rectangle (3.2,1.1);
    \draw[-, thick] (2.7,0.9) -- (3.3,0.9);
    \draw[-, thick] (2.7,1.1) -- (3.3,1.1);
    \node (C) at (3.75,1) {};
    
    \draw[->, thick] (1,0.15) -- node[left] {$v$} (1,1.85);
    \draw[<-, thick] (0.25,0) -- node[above] {$i$} (0.85,0);
    \draw[->, thick] (0.25,2) -- node[above] {$i$} (0.85,2);
    
    \draw[black, fill=white] (1,0) circle (0.05);
    \draw[black, fill=white] (1,2) circle (0.05);
    
    \end{tikzpicture}
    \caption{Circuit that synthesises \cref{eq:cap}.}
    \label{fig:capcap}
\end{figure}

Consider now the electrical components with dynamics
\begin{equation}\label{eq:trans}
\begin{bmatrix}
V_1\\I_2
\end{bmatrix}=\begin{bmatrix}
0&A\\-A^T&0
\end{bmatrix}
\begin{bmatrix}
I_1\\V_2
\end{bmatrix}
\end{equation}
and
\begin{equation}\label{eq:cap}
\begin{aligned}
\dot{q}&=I_3+w\\
V_3&=q.
\end{aligned}
\end{equation}
The component described by \cref{eq:trans} can be synthesised using transformers for any $A\in\R^{n\times{}m}$ (see e.g. \cite[\S{}VI.2)]{Wil76}), and that in \cref{eq:cap} using capacitors and current sources (see \Cref{fig:capcap}). Interconnecting these components according to Kirchhoff's relations
\[
I_2+I_3=0\;\text{and}\;V_2=V_3
\]
yields a system $G:\funof{w,I_1}\rightarrow{}V_1$ with realisation
\[
\begin{aligned}
\dot{q}&=AI_1+w,\;q\funof{0}=0,\\
V_1&=A^Tq.
\end{aligned}
\]
\Cref{thm:main} clearly applies with $Q\equiv{}I$. This shows that the control law
\[
I_1=-\alpha{}^{-1}V_1
\]
is optimal with respect to \Cref{prob:2}. This controller can be synthesised by connecting the resistors
\[
V_4=\alpha{}I_4
\]
to the existing circuit according to
\[
I_1+I_4=0\;\text{and}\;V_1=V_4.
\]
Now consider the behaviour of the circuit if we apply the current
\[
w\tm=bH\tm
\]
using the current sources, where $H\tm$ denotes the unit step. The final value theorem shows that
\[
\begin{aligned}
\lim_{t\rightarrow{}\infty}V_1\tm&=\lim_{s\rightarrow{}0}sA^T\funof{sI+\alpha^{-1}AA^T}^{-1}\frac{1}{s}b\\
&=\alpha{}A^{\dagger{}}b.
\end{aligned}
\]
This implies that as $t\rightarrow{}\infty$, the current flowing through the resistors equals the solution to \cref{eq:lsex}. That is, this simple electrical circuit can be used to solve the least squares problem in \cref{eq:lsex} for any $b$, while minimising the dynamic performance objective in \Cref{prob:2}. In particular adjusting the value of the resistance $\alpha$ allows the speed with which the problem is solved to be balanced against the energy losses (which will heat the system up).

\section{Conclusions}

It has been shown that if a system is of the relaxation type, then some simple \Hfty{}-type control problems can be solved analytically. The resulting controllers inherit the structural properties of the system. Therefore if the original system has a sparse structure, the optimal controllers can be synthesised with sparse resistive networks. This has been demonstrated by designing simple, sparse, electrical circuits to optimally control large scale inductive networks and to solve linear regression problems.

\vspace{-2mm}
\bibliographystyle{IEEEtran}
\bibliography{references.bib}
\vspace{-2mm}

\appendix{}

\begin{lemma}\label{lem:lb}
Let $G_1\in\C^{n\times{}n}$, $G_2\in\C^{n\times{}n}$ and $G_3\in\C^{n\times{}m}$. If $G_1$ is invertible, then given any vector $v\in\C^m$
\begin{equation}\label{op:1}
\begin{aligned}
-G_1G_1^*G_2^*=&\arg\min_{K\in\C^{n\times{}n}}\norm{z}_2\\
\text{s.t.}\;\;&z=\begin{bmatrix}
I\\-G_1^{-1}K
\end{bmatrix}\funof{I-G_2K}^{-1}G_3v.
\end{aligned}
\end{equation}
\end{lemma}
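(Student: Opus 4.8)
The plan is to eliminate the matrix variable $K$ in favour of a single vector, turning the problem into an ordinary regularised least-squares problem. Write $w=\funof{I-G_2K}^{-1}G_3v$ for the signal appearing in $z$, and introduce $u=-Kw$. Since $\funof{I-G_2K}w=G_3v$ we have $w+G_2u=G_3v$, hence $w=G_3v-G_2u$ and
\[
z=\begin{bmatrix}w\\-G_1^{-1}Kw\end{bmatrix}=\begin{bmatrix}G_3v-G_2u\\G_1^{-1}u\end{bmatrix},
\]
so that $\norm{z}_2^2=\norm{G_3v-G_2u}_2^2+\norm{G_1^{-1}u}_2^2=:f\funof{u}$. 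The key observation is that $z$ depends on $K$ only through the vector $u=-Kw$. Rather than establish that $u$ ranges over all of $\C^n$ as $K$ varies (which is essentially true, but needs care in degenerate cases), it is cleaner to note that for every $K$ making $z$ well defined one has $\norm{z}_2^2=f\funof{-Kw}\geq\min_{u\in\C^n}f\funof{u}$, and then show that the candidate controller is both admissible and attains this lower bound.

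Next I would solve the unconstrained problem $\min_u f\funof{u}$. The function $f$ is a strictly convex quadratic in $u$ because $\funof{G_1^{-1}}^*G_1^{-1}=\funof{G_1G_1^*}^{-1}\succ 0$ (this is where invertibility of $G_1$ enters), so it has a unique minimiser $u_\star$ characterised by the normal equations $\funof{G_2^*G_2+\funof{G_1G_1^*}^{-1}}u_\star=G_2^*G_3v$, obtained by setting the Wirtinger gradient of $f$ to zero, or equivalently by separating real and imaginary parts.

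Finally I would verify the claimed controller $K_\star=-G_1G_1^*G_2^*$. It is admissible since $I-G_2K_\star=I+G_2G_1G_1^*G_2^*\succeq I\succ 0$ is invertible. Writing $w_\star=\funof{I-G_2K_\star}^{-1}G_3v$ and $u_\star'=-K_\star w_\star=G_1G_1^*G_2^*w_\star$, the identity $w_\star=G_3v-G_2u_\star'$ from the first step gives $\funof{I+G_1G_1^*G_2^*G_2}u_\star'=G_1G_1^*G_2^*G_3v$; multiplying on the left by $\funof{G_1G_1^*}^{-1}$ reproduces exactly the normal equations above, so $u_\star'=u_\star$ and hence $\norm{z}_2^2=f\funof{u_\star}$ at $K_\star$. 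Combined with the lower bound, this proves $K_\star$ is optimal, and since $K_\star$ does not depend on $v$, the same $K_\star$ works for every $v$.

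I expect the step that carries the whole argument to be the change of variables in the first paragraph: recognising that the $K$-dependence of $z$ factors entirely through $u=-Kw$, which collapses an apparent matrix optimisation to a vector least-squares problem. The remaining points requiring a little care are the admissibility/well-posedness condition (invertibility of $I-G_2K$), which is sidestepped here by only needing admissibility of the single candidate $K_\star$ together with the universal lower bound, and the use of complex rather than real differentiation, which is handled by Wirtinger calculus.
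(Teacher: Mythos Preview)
Your proof is correct and follows essentially the same strategy as the paper: relax the matrix optimisation to a vector least-squares problem, solve that to obtain a lower bound, and then verify that the candidate $K_\star=-G_1G_1^*G_2^*$ is admissible and attains it. The only cosmetic difference is that you parameterise the relaxed problem by the input vector $u=-Kw$ and solve the resulting Tikhonov-regularised normal equations, whereas the paper parameterises by $z$ subject to the linear constraint $\begin{bmatrix}I & G_2G_1\end{bmatrix}z=G_3v$ and invokes the pseudo-inverse; these are equivalent via $u=G_1z_2$.
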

\begin{proof}
Consider
\begin{equation}\label{op:2}
\begin{aligned}
\min_{K\in\C^{n\times{}n}}&\norm{z}_2\\
\text{s.t.}\;\;\begin{bmatrix}
I&G_2G_1
\end{bmatrix}z&=G_3v\\
z&=\begin{bmatrix}
I\\-G_1^{-1}K
\end{bmatrix}x.
\end{aligned}
\end{equation}
Eliminating $x$ from the above shows that the constraints in \cref{op:1,op:2} are the same, and therefore that these problems are equivalent. We may obtain a lower bound to the problem in \cref{op:2} by dropping the final constraint. If this is done, \cref{op:2} becomes a standard minimum norm least squares problem, with optimal solution
\[
z\equiv{}\begin{bmatrix}
I&G_2G_1
\end{bmatrix}^\dagger{}G_3v=\begin{bmatrix}
I\\G_1^*G_2^*
\end{bmatrix}\funof{I+G_2G_1G_1^*G_2}^{-1}G_3v.
\]
Setting $K\equiv{}-G_1G_1^*G_2^*$ in the constraint in \cref{op:1} achieves precisely this $z$, which completes the proof.
\end{proof}

\end{document}